\newtheorem{theorem}{Theorem}[section]
\newtheorem{lemma}[theorem]{Lemma}
\newtheorem{proposition}[theorem]{Proposition}
\newtheorem{corollary}[theorem]{Corollary}
\theoremstyle{definition}\newtheorem{definition}{Definition}[section]
\theoremstyle{remark}
\theoremstyle{remark}\newtheorem{rmk}{Remark}
\theoremstyle{remark}
\theoremstyle{remark}
\newcommand{\Hom}{\text{Hom}}
\newcommand{\id}{\text{id}}
\newcommand{\simto}{\overset{\sim}{\to}}
\newcommand{\rD}{\mathrm{D}}
\newcommand{\uC}{\underline{C}}
\newcommand{\ubI}{\underline{\mathbb{I}}}
\newcommand{\obj}{\text{obj}}
\begin{document}
\title{A recurrent formula of $A_{\infty}$-quasi inverses of dg-natural transformations between dg-lifts of derived functors}
%\shorttitle{Twisted complexes as dg-enhancements}
\author{Zhaoting Wei}
%\email {zwei3@kent.edu}
%\address{Kent State University at Geauga, 14111 Claridon-Troy Road, Burton, OH 44021}

%\correspdetails{zhaotwei@indiana.edu}
%\received{\today}
\maketitle

\begin{abstract}
A dg-natural transformation between dg-functors is called an objectwise homotopy equivalence if its induced morphism on each object admits a homotopy inverse. In general an objectwise homotopy equivalence does not have a dg-inverse but has an $A_{\infty}$ quasi-inverse. In this note we give a recurrent formula of the $A_{\infty}$ quasi-inverse. This result is useful in studying the compositions of dg-lifts of derived functors of schemes.
\end{abstract}

\section{Introduction}
In \cite{schnurer2018six}, Schn\"{u}rer constructed Grothendieck six functor formalism of dg-enhancements for ringed spaces over a field $k$. In more details, for each $k$-ringed space $X$ we have a dg $k$-category $\ubI(X)$ which is a dg-enhancement of $\rD(X)$, the derived category of sheaves of $\mathcal{O}_X$-modules. Moreover for a morphism $f: X\to Y$ of $k$-ringed space we have dg $k$-functors  
$$
\underline{f}^*: \ubI(Y)\to \ubI(X) \text{ and } \underline{f}_*: \ubI(X)\to \ubI(Y)
$$
which are dg-lifts of the derived functos $\mathbf{L}f^*$ and $\mathbf{R}f_*$, respectively. In addition, Schn\"{u}rer showed that for two composable morphisms $f: X\to Y$ and $g: Y\to Z$, we have zig-zags of dg-natural transformations which are \emph{objectwise homotopy equivalences} between $\underline{(gf)}^*$ and $\underline{f}^*\underline{g}^*$ and 
 $\underline{(gf)}_*$ and $\underline{f}_*\underline{g}_*$.

We call a dg $k$-natural transformation $\Phi: F\rightarrow G$ an objectwise homotopy equivalence if for any object $\mathcal{E}$, the induced morphism 
$$
\Phi_{\mathcal{E}}: F(\mathcal{E})\to G(\mathcal{E})
$$
has a homotopy inverse. This does not mean that we could find a homotopy inverse of $\Phi$ because the objectwise homotopy inverses are not compatible with morphisms as illustrated in the following diagrams

$$
\begin{CD}
F(\mathcal{E}_1) @>\Phi_{\mathcal{E}_1}>>G(\mathcal{E}_1) \\
@VVF(\alpha)V  @V \circlearrowright ~G(\alpha) VV\\
F(\mathcal{E}_2) @>\Phi_{\mathcal{E}_2}>> G(\mathcal{E}_2)
\end{CD}    \text{ ~~~~~but }
\begin{CD}
F(\mathcal{E}_1) @<\Phi^{-1}_{\mathcal{E}_1}<<G(\mathcal{E}_1) \\
@VVF(\alpha)V  @ V \ncirclearrowright~G(\alpha) VV\\
F(\mathcal{E}_2)@<\Phi^{-1}_{\mathcal{E}_2}<< G(\mathcal{E}_2).
\end{CD} 
$$

Nevertheless, by \cite[Proposition 7.15]{lyubashenko2003category} we know that we can extend $\Phi^{-1}$ to an $A_{\infty}$ natural transformation $\Psi:G\Rightarrow F$ and $\Psi$ is an $A_{\infty}$ quasi-inverse of $\Phi$. In this note we give a detailed construction of the recurrent formula of $\Psi$ as suggested in \cite[Appendix B]{lyubashenko2003category}. In particular we show that we can construct $\Psi$ by compositions of objectwisely chosen homotopies. See Theorem \ref{thm: A-infty quasi-inverse} below. This formula will be used in \cite{wei2019twisted}.

\section*{Acknowledgments}
The author would like to thank Nick Gurski and Olaf Schn\"{u}rer for very helpful discussions.

\section{A review of dg-natural transformations and $A_{\infty}$-natural transformations}
In this section we review some concepts around dg-functors, dg-natural transformations, and $A_{\infty}$-natural transformations. 

\begin{rmk}
We would like to point out that the best way to describe $A_{\infty}$-categories\slash functors \slash natural-transformations is in the framework of bar constructions and dg-cocategories, see \cite{lyubashenko2003category}. In this note we just take the by-hand definition, which requires minimal amount of preparation but involves more complicated notations.
\end{rmk}

\begin{definition}[dg-categories]\label{def: dg-category}
Let $k$ be a commutative ring with unit. A \emph{differential graded} or \emph{dg $k$-category} is a category $\mathcal{C}$ whose morphism spaces are cochain complexes of $k$-modules and whose compositions of morphisms
$$
\mathcal{C}(Y,Z)\otimes_k \mathcal{C}(X,Y)\to  \mathcal{C}(X,Z)
$$
are morphisms of $k$-cochain complexes. Furthermore, there are obvious associativity and unit axioms.
\end{definition}

\begin{definition}[dg-functors]\label{def: dg-functor}
Let $k$ be a commutative ring with unit and $\mathcal{C}$ and $\mathcal{D}$ be two dg $k$-categories. A \emph{dg $k$-functor} $F: \mathcal{C}\to \mathcal{D}$ consists of the following data:
\begin{enumerate}
\item A map $F: \obj(\mathcal{C})\to \obj(\mathcal{D})$;
\item For any objects $X$, $Y\in \obj(\mathcal{C})$, a closed, degree $0$ morphism of complexes of $k$-modules
$$
F(X,Y):\mathcal{C}(X,Y)\to \mathcal{D}(FX,FY)
$$
which is compatible with the composition and the units.
\end{enumerate}
\end{definition}

\begin{definition}[dg-natural transformation]\label{def: dg-natural transformation}
Let $k$ be a commutative ring with unit and $F$, $G: \mathcal{C}\to \mathcal{D}$ be two dg $k$-functors between dg $k$-categories. A \emph{dg $k$-prenatural transformation} $\Phi: F\Rightarrow G$ of degree $n$ consists of a morphism
$$
\Phi_X\in  \mathcal{D}^n(FX,GX) \text{ for each object }X
$$
such that for any morphism $u\in \mathcal{C}^m(X,Y)$ we have
$$
\Phi_YFu=(-1)^{mn}Gu\Phi_X.
$$
The differential on $\Phi$ is defined objectwisely and it is clear that $d\Phi$ is  a dg $k$-prenatural transformation of degree $n+1$. We call $\Phi$ a \emph{dg $k$-natural transformation} if $\Phi$ is closed and of degree $0$.
\end{definition}

\begin{definition}[$A_{\infty}$-prenatural transformation]\label{def: A_infty-prenatural transformation}
Let $k$ be a commutative ring with unit and $F$, $G: \mathcal{C}\to \mathcal{D}$ be two dg $k$-functors between dg $k$-categories. An \emph{$A_{\infty}$ $k$-prenatural transformation} $\Phi: F\Rightarrow G$ of degree $n$ consists of the following data:
\begin{enumerate}
\item For any object $X\in \obj(\mathcal{C})$, a morphism $\Phi^0_X\in  \mathcal{D}^n(FX,GX)$;
\item For any $l\geq 1$ and any objects $X_0,\ldots, X_l\in \obj(\mathcal{C})$, a morphism 
$$
\Phi^l_{X_0,\ldots, X_l}\in \Hom^{n-l}_k(\mathcal{C}(X_{l-1},X_l)\otimes_k \ldots \otimes_k \mathcal{C}(X_0,X_1), \mathcal{D}(FX_0,GX_l))
$$
\end{enumerate}
\end{definition}

\begin{definition}[Differential of $A_{\infty}$-prenatural transformation]\label{def: differential of A_infty-prenatural transformation}
Let $k$ be a commutative ring with unit and $F$, $G: \mathcal{C}\to \mathcal{D}$ be two dg $k$-functors between dg $k$-categories. Let $\Phi: F\Rightarrow G$ be an $A_{\infty}$ $k$-prenatural transformation of degree $n$ as in Definition \ref{def: A_infty-prenatural transformation}. Then the \emph{differential} $d\Phi: F\Rightarrow G$ is an  $A_{\infty}$ $k$-prenatural transformation of degree $n+1$ whose components are given as follows:
\begin{enumerate}
\item For any object $X\in \obj(\mathcal{C})$,$(d^{\infty}\Phi)^0_X=d(\Phi^0_X) \in  \mathcal{D}^{n+1}(FX,GX)$;
\item For any $l\geq 1$ and a collection of morphisms $u_i\in  \mathcal{C}(X_{i-1},X_i)$ $i=1,\ldots, l$,
\begin{equation}\label{eq: differential of A infty prenatural transformation}
\begin{split}
&(d^{\infty}\Phi)^l(u_l\otimes\ldots \otimes u_1)=\\
&d(\Phi^l(u_l\otimes\ldots \otimes u_1))+(-1)^{|u_l|-1}G(u_l)\Phi^{l-1}(u_{l-1}\otimes \ldots \otimes u_1)\\
&+(-1)^{n|u_1|-|u_1|-\ldots-|u_l|+l-1}\Phi^{l-1}(u_l\otimes\ldots\otimes u_2)F(u_1)\\
&+\sum_{i=1}^l(-1)^{|u_l|+\ldots+|u_{i+1}|+l-i+1}\Phi^{l}(u_l\otimes\ldots du_i\otimes \ldots u_1)\\
&+\sum_{i=1}^{l-1}(-1)^{|u_l|+\ldots+|u_{i+1}|+l-i+1}\Phi^{l-1}(u_l\otimes\ldots u_{i+1}u_i\otimes \ldots u_1)
\end{split}
\end{equation}
\end{enumerate}
\end{definition}

We can check that $d^{\infty}\circ d^{\infty}=0$ on $A_{\infty}$ $k$-prenatural transformations.

\begin{definition}[$A_{\infty}$-natural transformation]\label{def: A_infty natural transformation}
Let $k$ be a commutative ring with unit and $F$, $G: \mathcal{C}\to \mathcal{D}$ be two dg $k$-functors between dg $k$-categories. Let $\Phi: F\Rightarrow G$ be an $A_{\infty}$ $k$-prenatural transformation. We call $\Phi$ an \emph{$A_{\infty}$ $k$-natural transformation} if $\Phi$ is of degree $0$ and closed under the differential $d^{\infty}$ in Definition \ref{def: differential of A_infty-prenatural transformation}.
\end{definition}

It is clear that a dg $k$-natural transformation $\Phi$ can be considered as an $A_{\infty}$ $k$-natural transformation  with $\Phi^l=0$ for all $l\geq 1$.

\begin{definition}[Compositions]\label{def: composition of A-infty natural transformation}
Let $k$ be a commutative ring with unit and $F$, $G$, $H: \mathcal{C}\to \mathcal{D}$ be three dg $k$-functors between dg $k$-categories. Let $\Phi: F\Rightarrow G$ be a dg $k$-natural transformation and $\Psi: G\Rightarrow H$ be an $A_{\infty}$ $k$-natural transformation. Then the composition $\Psi\circ \Phi$ is defined as follows: For any object $X\in \obj(\mathcal{C})$
$$
(\Psi\circ \Phi)^0_X:=\Psi^0_X\Phi_X: FX\to GX\to HX
$$
and for any $u_i\in \mathcal{C}(X_{i-1},X_i)$, $i=1,\ldots, l$
$$
(\Psi\circ \Phi)^l(u_l\otimes\ldots u_1):=\Psi^l(u_l\otimes\ldots u_1) \Phi_{X_0}
$$
We can check that $\Psi\circ \Phi$ is an $A_{\infty}$ $k$-natural transformation.

Similarly,  Let $\Phi: F\Rightarrow G$ be an $A_{\infty}$ $k$-natural transformation and $\Psi: G\Rightarrow H$ be a dg $k$-natural transformation. Then  the composition $\Psi\circ \Phi$ is defined as follows: For any object $X\in \obj(\mathcal{C})$
$$
(\Psi\circ \Phi)^0_X:=\Psi_X\Phi^0_X: FX\to GX\to HX
$$
and for any $u_i\in \mathcal{C}(X_{i-1},X_i)$, $i=1,\ldots, l$
$$
(\Psi\circ \Phi)^l(u_l\otimes\ldots u_1):=\Psi_{X_l} \Phi^l(u_l\otimes\ldots u_1)
$$
We can check that $\Psi\circ \Phi$ is an $A_{\infty}$ $k$-natural transformation.
\end{definition}

\begin{rmk}
We can define compositions for general $A_{\infty}$ $k$-prenatural transformations. See \cite[Section 3]{lyubashenko2003category} or \cite[Section I.1(d)]{seidel2008fukaya}.
\end{rmk}

\begin{definition}[$A_{\infty}$ quasi-inverse]\label{def: A-infty quasi-inverse}
Let $k$ be a commutative ring with unit and $F$, $G: \mathcal{C}\to \mathcal{D}$ be two dg $k$-functors between dg $k$-categories. Let $\Phi: F\Rightarrow G$ be a dg $k$-natural transformation. We call an  $A_{\infty}$ $k$-natural transformation $\Psi: G\Rightarrow F$ an \emph{$A_{\infty}$ quasi-inverse} of $\Phi$ if there exists $A_{\infty}$ $k$-prenatural transformations $\eta: F\Rightarrow F$ and $\omega: G\Rightarrow G$ both of degree $-1$ such that
$$
\Psi\circ \Phi-\id_F=d^{\infty}\eta, \text{ and } \Phi\circ \Psi-\id_G=d^{\infty}\omega.
$$
In more details, this means that we have
$$
\Psi^0_X\Phi_X-\id_{FX}=d\eta^0_X, \text{ and } \Phi_X\Psi^0_X-\id_{GX}=d\omega^0_X \text{ for any } X\in \obj{\mathcal{C}}
$$
and for any $l\geq 1$ and any $u_i\in \mathcal{C}(X_{i-1},X_i)$, $i=1,\ldots, l$, we have
\begin{equation}
\begin{split}
&\Psi^l(\Phi(u_l)\otimes\ldots\otimes \Phi(u_1))=\\
&d(\eta^l(u_l\otimes\ldots \otimes u_1))+(-1)^{|u_l|-1}G(u_l)\eta^{l-1}(u_{l-1}\otimes \ldots \otimes u_1)\\
&+(-1)^{-|u_2|-\ldots-|u_l|+l-1}\eta^{l-1}(u_l\otimes\ldots\otimes u_2)F(u_1)\\
&+\sum_{i=1}^l(-1)^{|u_l|+\ldots+|u_{i+1}|+l-i+1}\eta^{l}(u_l\otimes\ldots du_i\otimes \ldots u_1)\\
&+\sum_{i=1}^{l-1}(-1)^{|u_l|+\ldots+|u_{i+1}|+l-i+1}\eta^{l-1}(u_l\otimes\ldots u_{i+1}u_i\otimes \ldots u_1)
\end{split}
\end{equation}
and
\begin{equation}
\begin{split}
&\Phi(\Psi^l(u_l\otimes\ldots\otimes u_1))=\\
&d(\omega^l(u_l\otimes\ldots \otimes u_1))+(-1)^{|u_l|-1}G(u_l)\omega^{l-1}(u_{l-1}\otimes \ldots \otimes u_1)\\
&+(-1)^{-|u_2|-\ldots-|u_l|+l-1}\omega^{l-1}(u_l\otimes\ldots\otimes u_2)F(u_1)\\
&+\sum_{i=1}^l(-1)^{|u_l|+\ldots+|u_{i+1}|+l-i+1}\omega^{l}(u_l\otimes\ldots du_i\otimes \ldots u_1)\\
&+\sum_{i=1}^{l-1}(-1)^{|u_l|+\ldots+|u_{i+1}|+l-i+1}\omega^{l-1}(u_l\otimes\ldots u_{i+1}u_i\otimes \ldots u_1)
\end{split}
\end{equation}
\end{definition}

\section{Review of functorial injective resolutions and a dg-lifts of derived functors}\label{section: dg-lift}
The main reference of this section is \cite{schnurer2018six}.
\subsection{Functorial injective resolutions}
Let $k$ be a field and $X$ be a $k$-ringed space. Let $\uC(X)$ be the dg $k$-category of complexes of sheaves on $X$ and $\ubI(X)$ its full dg $k$-subcategory of h-injective complexes of injective sheaves. Let $\ubI^b(X)$ and $\ubI^+(X)$ be the full subcategories of $\ubI(X)$ consisting of complexes with bounded or bounded below cohomology sheaves, respectively. See \cite{spaltenstein1988resolutions} or \cite[Chapter 14]{kashiwara2005categories} for an introduction to h-injective complexes.

 It is clear that  $\ubI(X)$ is a strongly pretriangulated dg $k$-category hence its homotopy category  $[\ubI(X)]$ is a triangulated $k$-category and the obvious functor $[\ubI(X)]\to \rD(X)$ is a triangulated equivalence.

We could construct an equivalence in the other direction.

\begin{proposition}\label{prop: functorial injective resolution}[\cite[Corollary 2.3]{schnurer2018six}]
Let $k$ be a field. Let $(X,\mathcal{O})$ be a $k$-ringed site and let $\uC(X)_{\text{hflat, cwflat}}$ denote the full dg $k$-subcategory of  $\uC(X)$ of h-flat and componentwise flat objects. Then there exists  dg $k$-functors
\begin{equation}\label{eq: functorial injective resolutions}
\begin{split}
\mathbf{i}:& \uC(X)\to \ubI(X)\\
\mathbf{e}:& \uC(X)\to \uC(X)_{\text{hflat, cwflat}}
\end{split}
\end{equation}

together with   dg $k$-natural transformations
\begin{equation}\label{eq: functorial injective natural transformations}
\begin{split}
\iota:& \id\to \mathbf{i}: \uC(X)\to \uC(X)\\
\epsilon:& \mathbf{e} \to \id: \uC(X)\to \uC(X)
\end{split}
\end{equation}
whose evaluations $\iota_{\mathcal{F}}:\mathcal{F}\to \mathbf{i}\mathcal{F}$ and $\epsilon_{\mathcal{F}}:\mathbf{e}\mathcal{F}\to \mathcal{F}$ at each object $\mathcal{F}\in \uC(X)$ are quasi-isomorphisms. 
\end{proposition}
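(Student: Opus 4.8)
The plan is to reduce the statement to the construction of a single \emph{additive, functorial} embedding of an $\mathcal{O}_X$-module into an injective object (dually, a surjection onto it from a flat object), to iterate this into a resolution of a sheaf, and then to promote it to a resolution of an arbitrary complex by a Cartan--Eilenberg totalization. The key constraint, which dictates every choice, is that the ingredients must be $k$-\emph{linear} functors and natural transformations between them: only then does the resulting assignment $\mathcal{F}^\bullet\mapsto\mathbf{i}\mathcal{F}^\bullet$ act on the Hom-complexes $\uC(X)(\mathcal{F}^\bullet,\mathcal{G}^\bullet)$ as a chain map, and hence define a genuine dg $k$-functor rather than a functor up to homotopy. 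This rules out resolutions produced by Zorn's lemma or the bare small object argument and forces the use of explicit comonadic (cotriple) constructions, which are manifestly additive.

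For $\mathbf{i}$ I would first fix an additive functorial monomorphism $\mathcal{F}\hookrightarrow Q(\mathcal{F})$ with $Q(\mathcal{F})$ an injective $\mathcal{O}_X$-module. Since the $\mathcal{O}_X$-modules on a ringed site form a Grothendieck abelian category, such a $Q$ exists; for ringed spaces one may take the Godement-type formula $Q(\mathcal{F})=\prod_x (i_x)_* J(\mathcal{F}_x)$, where $J$ is a functorial injective embedding of stalk modules (built from character-module duality), using that $(i_x)_*$ preserves injectives and that products of injectives are injective. Iterating $Q$ against its cokernels yields a functorial injective coresolution $0\to\mathcal{F}\to Q^0\mathcal{F}\to Q^1\mathcal{F}\to\cdots$ of a single sheaf. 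Applying it in each degree of a complex $\mathcal{F}^\bullet$ produces a functorial double complex of injectives, and totalizing gives $\mathbf{i}\mathcal{F}^\bullet$ together with the augmentation $\iota_{\mathcal{F}}\colon\mathcal{F}^\bullet\to\mathbf{i}\mathcal{F}^\bullet$.

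Dually, for $\mathbf{e}$ I would use the free--forgetful adjunction between $\mathcal{O}_X$-modules and sheaves of sets on the site. Its counit $\mathcal{O}_X^{(\mathcal{F})}\twoheadrightarrow\mathcal{F}$ is a functorial surjection from a free---hence flat and componentwise flat---module, and the associated bar resolution is a functorial simplicial flat resolution; applying it degreewise and totalizing yields $\epsilon_{\mathcal{F}}\colon\mathbf{e}\mathcal{F}^\bullet\to\mathcal{F}^\bullet$ with $\mathbf{e}\mathcal{F}^\bullet$ componentwise flat. Because $Q$, $J$, the free functor, and all the bar and coresolution differentials are $k$-linear and natural, the induced maps on Hom-complexes are chain maps; thus $\mathbf{i}$ and $\mathbf{e}$ are dg $k$-functors and $\iota,\epsilon$ are dg $k$-natural transformations.

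The main obstacle is the \emph{unbounded} case: I must verify that the totalized objects genuinely lie in $\ubI(X)$ and in $\uC(X)_{\text{hflat, cwflat}}$, i.e. are h-injective and h-flat, and that $\iota,\epsilon$ are quasi-isomorphisms. For bounded-below (resp. bounded-above) complexes this is the classical comparison of resolutions via a convergent spectral sequence, but for unbounded complexes the naive total complex can fail to be h-injective. Here I would follow Spaltenstein: realize $\mathbf{i}\mathcal{F}^\bullet$ as a homotopy limit of the h-injective resolutions of the stupid truncations $\sigma^{\geq n}\mathcal{F}^\bullet$ (dually $\mathbf{e}\mathcal{F}^\bullet$ as a homotopy colimit for h-flatness), using the product totalization for $\mathbf{i}$ and the coproduct totalization for $\mathbf{e}$. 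Checking that these totalizations simultaneously preserve the relevant h-property, the componentwise injectivity or flatness, and the strict functoriality---and controlling the convergence of the attendant spectral sequences so that $\iota$ and $\epsilon$ remain quasi-isomorphisms---is where the genuine work lies.
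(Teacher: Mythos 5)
Your guiding principle---that every ingredient must be a $k$-linear functor or natural transformation, or else one only gets a functor up to homotopy---is exactly the right one, but the concrete constructions you offer violate it, and this is a genuine gap rather than a technicality. Note first what the paper itself does: it gives no proof of this proposition but quotes it from Schn\"urer's Corollary 2.3, whose argument runs through \emph{enriched} ($k$-linear) model category theory, specifically the enriched algebraic small object argument (the paper's first remark points to Riehl, Corollary 13.2.4). The reason it must be done that way is recorded in the paper's second remark: for $k=\mathbb{Z}$ the pair $(\mathbf{i},\iota)$ does \emph{not} exist (Schn\"urer's Lemma 4.4 is a counterexample). Your proposal never invokes the hypothesis that $k$ is a field: Godement-type embeddings, character-module duality, the free--forgetful bar resolution, and Spaltenstein-style totalizations all make sense verbatim over $\mathbb{Z}$. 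An argument that would prove the statement over $\mathbb{Z}$ proves too much, so at least one step must fail.

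The failure is precisely at the additivity you claim. The free functor from sheaves of sets to $\mathcal{O}_X$-modules is functorial but neither additive nor $k$-linear on morphisms ($\mathcal{O}_X^{(f+g)}\neq\mathcal{O}_X^{(f)}+\mathcal{O}_X^{(g)}$), so the bar resolution does not act $k$-linearly on the Hom complexes $\uC(X)(\mathcal{F},\mathcal{G})$, and your $\mathbf{e}$ is not a dg $k$-functor; being comonadic does not make a construction additive. Similarly, character-module duality does not give an additive functorial embedding into injectives: $\Hom_{\mathbb{Z}}(M^{\vee},\mathbb{Q}/\mathbb{Z})$ is injective only when $M^{\vee}$ is flat, and the standard repair (a functorial free presentation of $M^{\vee}$, taken on underlying sets) reintroduces the non-additive free functor, so your $J$, hence $Q$ and $\mathbf{i}$, is not $k$-linear either. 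The way the field hypothesis actually enters is different and simpler: over a field every $k$-module is injective, so coinduction $M\mapsto\Hom_k(R,M)$ along $k\to R$ is a $k$-linear, additive, functorial embedding into injective $R$-modules, with unit $m\mapsto(r\mapsto rm)$; this is the germ of the enriched small object argument that Schn\"urer uses, and it also works over a ringed \emph{site}, where your Godement formula is unavailable for lack of points. Finally, even granting a correct additive embedding, the unbounded case (h-injectivity of the totalization, dg-functoriality of the limiting procedure, convergence of the spectral sequences) is work you have flagged but not carried out, whereas in the enriched model-categorical approach it is part of what the cited machinery delivers.
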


It is clear that the induced functor $[\mathbf{i}]: [\uC(X)]\to [\ubI(X)]$ sends acyclic objects to zero, hence it factors to an equivalence
$$
\bar{[\mathbf{i}]}:\rD(X)\simto [\ubI(X)]
$$
of triangulated $k$-categories.

Intuitively the  dg $k$-functor $\mathbf{i}$ in Proposition \ref{prop: functorial injective resolution} could be considered as a functorial injective resolution.

\begin{rmk}
The result in Proposition \ref{prop: functorial injective resolution} is an adaption of general results from enriched model category theory in \cite{riehl2014categorical}, in particular \cite[Corollary 13.2.4]{riehl2014categorical}.
\end{rmk}

\begin{rmk}
The assumption that $k$ is a field is essential for Proposition \ref{prop: functorial injective resolution}. Actually if $k=\mathbb{Z}$, then the pair $(\mathbf{i},\iota)$ in Proposition \ref{prop: functorial injective resolution} does not exist. See \cite[Lemma 4.4]{schnurer2018six} for a counterexample.
\end{rmk}

\subsection{A dg-lift of the pull-back functor and push-forward functor}
\begin{definition}\label{def: injective pull back and push forward functors}[\cite[2.3.4]{schnurer2018six}]
Let $k$ be a field. For a morphism of $k$-ringed spaces $f: X\to Y$, we define the injective pull back dg $k$-functor $\underline{f}^*$ as 
\begin{equation}\label{eq: injective pull back functor}
\underline{f}^*:=\mathbf{i}\circ f^*\circ \mathbf{e}
\end{equation}
Similarly we define the injective push forward dg $k$-functor $\underline{f}_*$ as
\begin{equation}\label{eq: injective push forward functor}
\underline{f}_*:=\mathbf{i}\circ f_* 
\end{equation}
where $\mathbf{i}$ and $\mathbf{e}$ are defined in Proposition \ref{prop: functorial injective resolution}.
\end{definition}

\begin{rmk}
Actually in \cite{schnurer2018six} all Grothendieck's six functors were lifted to dg $k$-functors.
\end{rmk}

\begin{proposition}\label{prop: injective pull back functor lifts derived pull back functor}[\cite[Proposition 6.5]{schnurer2018six}]
Let $k$ be a field. For a morphism of $k$-ringed spaces $f: X\to Y$, the dg $k$-functors $\underline{f}^*$ and $\underline{f}_*$ in Definition \ref{def: injective pull back and push forward functors} are dg-lifts of the derived pull back and derived push forward functors $\mathbf{L}f^*: \rD(Y)\to \rD(X)$ and $\mathbf{R}f_*: \rD(X)\to \rD(Y)$, respectively. More precisely, the diagrams
$$
\begin{CD}
\rD(Y) @>\mathbf{L}f^*>> \rD(X)\\
@V\bar{[\mathbf{i}]}V\sim V @V\bar{[\mathbf{i}]}V\sim V\\
 [\ubI(Y)] @>[\underline{f}^*]>>  [\ubI(X)]
\end{CD}
$$
and
$$
\begin{CD}
\rD(X) @>\mathbf{R}f_*>> \rD(Y)\\
@V\bar{[\mathbf{i}]}V\sim V @V\bar{[\mathbf{i}]}V\sim V\\
 [\ubI(X)] @>[\underline{f}_*]>>  [\ubI(Y)]
\end{CD}
$$
commute up to a canonical $2$-isomorphism.
\end{proposition}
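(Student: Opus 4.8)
The plan is to make both composites explicit by computing $\mathbf{L}f^*$ and $\mathbf{R}f_*$ through the very resolution functors $\mathbf{e}$ and $\mathbf{i}$ supplied by Proposition \ref{prop: functorial injective resolution}, and then to compare the two ways around each square directly at the level of complexes.

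First I would fix the computations of the derived functors. Since each $\epsilon_{\mathcal{F}}:\mathbf{e}\mathcal{F}\to\mathcal{F}$ is a quasi-isomorphism onto an h-flat, componentwise flat complex, the functor $f^*\circ\mathbf{e}$ descends to $\mathbf{L}f^*$; since each $\iota_{\mathcal{G}}:\mathcal{G}\to\mathbf{i}\mathcal{G}$ is a quasi-isomorphism into an h-injective complex, the functor $f_*\circ\mathbf{i}$ descends to $\mathbf{R}f_*$. With these choices the right-then-down composites read $\bar{[\mathbf{i}]}\circ\mathbf{L}f^*=[\mathbf{i}\,f^*\mathbf{e}]$ and $\bar{[\mathbf{i}]}\circ\mathbf{R}f_*=[\mathbf{i}\,f_*\mathbf{i}]$, whereas, unwinding Definition \ref{def: injective pull back and push forward functors}, the down-then-right composites read $[\underline{f}^*]\circ\bar{[\mathbf{i}]}=[\mathbf{i}\,f^*\mathbf{e}\,\mathbf{i}]$ and $[\underline{f}_*]\circ\bar{[\mathbf{i}]}=[\mathbf{i}\,f_*\mathbf{i}]$.

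For the push-forward square the two expressions $[\mathbf{i}\,f_*\mathbf{i}]$ are literally equal, so that square commutes on the nose and its canonical $2$-isomorphism is the identity; the only point to record is that $\mathbf{i}\mathcal{G}$ is an admissible h-injective resolution. For the pull-back square the two sides differ only by an extra inner copy of $\mathbf{i}$, and I would build the comparison from the unit $\iota$: applying $\mathbf{i}\,f^*\mathbf{e}$ to the quasi-isomorphism $\iota_{\mathcal{F}}:\mathcal{F}\to\mathbf{i}\mathcal{F}$ yields a morphism
$$
\mathbf{i}\,f^*\mathbf{e}(\iota_{\mathcal{F}})\colon\mathbf{i}\,f^*\mathbf{e}(\mathcal{F})\longrightarrow\mathbf{i}\,f^*\mathbf{e}\,\mathbf{i}(\mathcal{F}),
$$
which I claim is invertible in $[\ubI(X)]$. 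This reduces to three facts I would check in turn: $\mathbf{e}$ preserves quasi-isomorphisms (apply the naturality square of $\epsilon$ to $\iota_{\mathcal{F}}$ and use two-out-of-three); $f^*$ preserves quasi-isomorphisms between h-flat complexes, which is exactly the property making $f^*\mathbf{e}$ compute $\mathbf{L}f^*$; and $\mathbf{i}$ turns a quasi-isomorphism into a quasi-isomorphism between h-injective complexes, hence into a homotopy equivalence that is invertible in $[\ubI(X)]$.

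The remaining and principal point is that these isomorphisms are natural, i.e.\ genuinely $2$-isomorphisms of functors. For an honest map $\phi$ of complexes the naturality identity $\iota_{\mathcal{G}}\phi=\mathbf{i}(\phi)\iota_{\mathcal{F}}$ is sent by $\mathbf{i}\,f^*\mathbf{e}$ to the required commuting square; for a general morphism of $\rD(Y)$, presented as a fraction of such maps and quasi-isomorphisms, the same compatibility propagates because every functor occurring preserves quasi-isomorphisms. The heaviest bookkeeping, and the step I expect to be the real obstacle, is verifying that this comparison is compatible with the triangulated structures---commuting with shifts and carrying distinguished triangles to distinguished triangles---which I would deduce from the fact that $\iota$ is a dg-natural transformation and is therefore compatible with the mapping cones that define those triangles.
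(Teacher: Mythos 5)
The paper itself does not prove this proposition: it is stated purely as a review item, imported by citation from \cite[Proposition 6.5]{schnurer2018six}, so there is no internal proof to compare yours against. Judged on its own merits, your argument is correct and is essentially the standard one. The models you fix for the derived functors --- $\mathbf{L}f^*$ as the descent of $f^*\mathbf{e}$ (legitimate because $\mathbf{e}\mathcal{F}$ is an h-flat resolution) and $\mathbf{R}f_*$ as the descent of $f_*\mathbf{i}$ --- do make the push-forward square commute strictly, and your comparison $\mathbf{i}f^*\mathbf{e}(\iota_{\mathcal{F}})$ for the pull-back square is the right map; the three facts you reduce its invertibility to are all true and correctly justified ($\mathbf{e}$ preserves quasi-isomorphisms by two-out-of-three against $\epsilon$; $f^*$ preserves quasi-isomorphisms between h-flat complexes; $\mathbf{i}$ turns quasi-isomorphisms into quasi-isomorphisms between h-injectives, which are homotopy equivalences and hence isomorphisms in $[\ubI(X)]$). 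Two remarks. First, your naturality step can be streamlined so that no explicit manipulation of roofs is needed: both composites $\rD(Y)\to[\ubI(X)]$ are descents, along the localization $\uC(Y)\to\rD(Y)$, of dg-functors that invert quasi-isomorphisms, and $\mathbf{i}f^*\mathbf{e}(\iota)$ is an honest dg-natural transformation upstairs which is objectwise invertible downstairs, so it descends to a natural isomorphism by the universal property of localization; the same observation dissolves the step you flagged as the "real obstacle," since for the statement as written a $2$-isomorphism is just a natural isomorphism of functors, and compatibility with shifts and triangles comes for free from the transformation being dg-natural between dg-functors of strongly pretriangulated dg-categories. Second, a small caveat: because you chose particular models of $\mathbf{L}f^*$ and $\mathbf{R}f_*$, to obtain the assertion for any other model (e.g.\ an abstractly defined derived functor) you must compose with the canonical comparison isomorphism between models; this is harmless, but it is exactly what justifies the word "canonical" in the statement and deserves a sentence.
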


\subsection{Objectwise homotopy equivalences}
By Definition \ref{def: injective pull back and push forward functors} it is clear that we do not have $\underline{(gf)}^*=\underline{f}^*\underline{g}^*$. Actually  $\underline{(gf)}^*$ and $\underline{f}^*\underline{g}^*$ are connected by a zig-zag of dg natural transformations. To describe this relation more clearly, we introduce the following definitions.

\begin{definition}\label{def: objectwise homotopy equivalence}[\cite[2.1.3]{schnurer2018six}]
Let $k$ be a field and $X$, $Y$ be $k$-ringed spaces.  Let  $F,G: \ubI(X) \to \ubI(Y)$ be dg $k$-functors. A dg $k$-natural transformation $\Phi: F\Rightarrow G$ is called an \emph{objectwise homotopy equivalence} if for any object $\mathcal{E}\in \obj(\ubI(X))$, the morphism $\Phi_{\mathcal{E}}: F\mathcal{E}\to G\mathcal{E}$ has a homotopic inverse.
\end{definition}

\begin{proposition}\label{prop: composition of pull back functors and push forward functors}[\cite[Proposition 6.17, Lemma 6.21]{schnurer2018six}]
Let $k$ be a field and $X\overset{f}{\to}Y\overset{g}{\to} Z$ be morphisms of $k$-ringed spaces. Then there exist zig-zags of objectwise homotopy equivalences
\begin{equation}\label{eq: 2 morphisms for compositions}
\begin{split}
T^{f,g}&: \underline{(gf)}^*\simto \underline{f}^*\underline{g}^*\\
T_{f,g}&: \underline{(gf)}_*\simto \underline{g}_*\underline{f}_*\\
T^{\id}&: \underline{\id}^*\simto \id\\
T_{\id}&: \underline{\id}_*\simto \id\\
\end{split}
\end{equation}
\end{proposition}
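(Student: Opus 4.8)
The plan is to construct the four zig-zags by hand, building each individual leg as a whiskering of the functorial resolution data $\iota\colon \id\to \mathbf{i}$ and $\epsilon\colon \mathbf{e}\to \id$ of Proposition \ref{prop: functorial injective resolution} by the dg $k$-functors $\mathbf{i},\mathbf{e},f^*,g^*,f_*,g_*$, together with the canonical comparison dg-isomorphisms of the underived functors $(gf)^*\cong f^*g^*$, $(gf)_*\cong g_*f_*$, $\id^*\cong\id$ and $\id_*\cong\id$. Since $\iota$ and $\epsilon$ are dg $k$-natural transformations and everything applied to them is a dg $k$-functor, each leg is automatically a dg $k$-natural transformation; the only genuine content is (i) inserting the correct intermediate functors so that every leg is an objectwise quasi-isomorphism, and (ii) upgrading ``quasi-isomorphism'' to ``objectwise homotopy equivalence''.

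For (ii) I would isolate, once and for all, the principle that drives the whole argument: a quasi-isomorphism between h-injective complexes is a homotopy equivalence, and every additive (in particular every dg) functor preserves homotopy equivalences and chain homotopies. Because each of the functors $\underline{f}^*,\underline{f}_*$ ends with an application of $\mathbf{i}$, both endpoints of every leg are h-injective; it therefore suffices to produce legs that are objectwise quasi-isomorphisms, using that $\mathbf{i}$ and $\mathbf{e}$ preserve quasi-isomorphisms (which follows from $\iota,\epsilon$ being objectwise quasi-isomorphisms together with the $2$-out-of-$3$ property applied to their naturality squares).

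Concretely, for the pull-back I would unfold $\underline{(gf)}^*=\mathbf{i}\,f^*g^*\,\mathbf{e}$ and $\underline{f}^*\underline{g}^*=\mathbf{i}\,f^*\,\mathbf{e}\mathbf{i}\,g^*\,\mathbf{e}$, so that the discrepancy is the composite $\mathbf{e}\mathbf{i}$ inserted in the middle. Writing $A=g^*\mathbf{e}\mathcal{E}$, which is an h-flat componentwise flat complex, I connect $\mathbf{i}f^*A$ to $\mathbf{i}f^*\mathbf{e}\mathbf{i}A$ through the intermediate term $\mathbf{i}f^*\mathbf{e}A$ by the zig-zag
\[
\mathbf{i}f^*A \xleftarrow{\ \mathbf{i}f^*\epsilon_{A}\ } \mathbf{i}f^*\mathbf{e}A \xrightarrow{\ \mathbf{i}f^*\mathbf{e}\,\iota_{A}\ } \mathbf{i}f^*\mathbf{e}\mathbf{i}A .
\]
The whole point of routing through $\mathbf{e}A$ rather than directly through $\mathbf{i}A$ is to keep the non-exact functor $f^*$ applied only to quasi-isomorphisms between h-flat componentwise flat complexes, which it does preserve; the outermost $\mathbf{i}$ then makes both maps quasi-isomorphisms between h-injective complexes, hence homotopy equivalences. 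The push-forward is dual: unfolding $\underline{(gf)}_*=\mathbf{i}\,g_*f_*$ and $\underline{g}_*\underline{f}_*=\mathbf{i}\,g_*\,\mathbf{i}\,f_*$, the single leg $\mathbf{i}\,g_*\iota_{f_*}\colon \mathbf{i}g_*f_*\to \mathbf{i}g_*\mathbf{i}f_*$ does the job, provided $f_*$ sends objects of $\ubI(X)$ to $g_*$-acyclic (e.g. h-injective) complexes so that $g_*\iota$ remains a quasi-isomorphism. For the identity cases, $\underline{\id}^*=\mathbf{i}\mathbf{e}$ is linked to $\id$ by $\mathbf{i}\mathbf{e}\mathcal{E}\xrightarrow{\mathbf{i}\epsilon_{\mathcal{E}}}\mathbf{i}\mathcal{E}\xleftarrow{\iota_{\mathcal{E}}}\mathcal{E}$ (a zig-zag of quasi-isomorphisms between h-injectives), and $\underline{\id}_*=\mathbf{i}$ is linked to $\id$ by the single arrow $\iota\colon\id\to\mathbf{i}=\underline{\id}_*$, which is an objectwise homotopy equivalence on h-injective objects.

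I expect the main obstacle to be the bookkeeping forced by the non-exactness of $f^*$ and $g_*$: a naive zig-zag passing directly through $\mathbf{i}A$ would apply $f^*$ to a quasi-isomorphism out of a merely h-injective complex, which $f^*$ need not preserve, and dually for $g_*$. The remedy is the deliberate detour through h-flat (respectively $g_*$-acyclic) intermediate objects exhibited above, after which the outermost $\mathbf{i}$ converts every surviving quasi-isomorphism into an honest objectwise homotopy equivalence. Verifying that $f_*$ indeed lands in $g_*$-acyclic complexes --- the one input that is about the geometry rather than formal manipulation --- is where I would appeal to the setting of \cite{schnurer2018six}.
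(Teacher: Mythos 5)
Your proposal is correct and follows essentially the same route as the paper: for the pull-back case the paper's proof is exactly your zig-zag $\mathbf{i}f^*g^*\mathbf{e} \xleftarrow{\epsilon} \mathbf{i}f^*\mathbf{e}g^*\mathbf{e} \xrightarrow{\iota} \mathbf{i}f^*\mathbf{e}\mathbf{i}g^*\mathbf{e}$, whiskering $\iota$ and $\epsilon$ through the h-flat intermediate object. The paper only sketches this one case and defers the remaining verifications (including the $g_*$-acyclicity input for the push-forward leg) to \cite[Proposition 6.17, Lemma 6.21]{schnurer2018six}, which is the same citation you fall back on, so your extra detail is a faithful expansion rather than a different argument.
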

\begin{proof}
We give the relation between $\underline{(gf)}^*$ and $\underline{f}^*\underline{g}^*$ to illustrate the idea. We use the dg $k$-natural transformations $\iota: \id\to \mathbf{i}$ and $\epsilon: \mathbf{e}\to \id$ in Proposition \ref{prop: functorial injective resolution} and have the following objectwise homotopy equivalences
\begin{equation*}
\begin{split}
 \underline{(gf)}^*&= \mathbf{i}(gf)^*\mathbf{e}\simto \mathbf{i}f^*g^*\mathbf{e}\\
& \xleftarrow[\sim]{\epsilon}  \mathbf{i}f^*\mathbf{e}g^*\mathbf{e}\xrightarrow[\sim]{\iota}  \mathbf{i}f^*\mathbf{e}\mathbf{i}g^*\mathbf{e}\\
&=\underline{f}^*\underline{g}^*.
\end{split}
\end{equation*}
\end{proof}

\section{Objectwise homotopy equivalences and $A_{\infty}$ quasi-inverses}

\begin{definition}\label{def: homotopy inverse systems}
Let $k$ be a field and $X$, $Y$ be   $k$-ringed spaces. Let   $F$, $G: \ubI(X)\to \ubI(Y)$ be two dg $k$-functors and $\Phi: F\to G$ be a dg $k$-natural transformation which is an objectwise homotopy equivalence. For each object $\mathcal{E}\in \obj(\ubI(X))$ we can choose $\Psi_{\mathcal{E}}\in \ubI^0(Y)(G\mathcal{E},F\mathcal{E})$, $h_{\mathcal{E}}\in \ubI^{-1}(Y)(F\mathcal{E},F\mathcal{E})$, $p_{\mathcal{E}}\in \ubI^{-1}(Y)(G\mathcal{E},G\mathcal{E})$, 
such that
$$
\Psi_{\mathcal{E}}\Phi_{\mathcal{E}}-\id_{F\mathcal{E}}=dh_{\mathcal{E}},~\Phi_{\mathcal{E}}\Psi_{\mathcal{E}}-\id_{G\mathcal{E}}=dp_{\mathcal{E}}.
$$
 We call such a choice an \emph{objectwise homotopy inverse system} of $\Phi$. 
\end{definition}

For a objectwise homotopy equivalence $\Phi$, its homotopy invese system always exists.

The following theorem is the main result of this note.

\begin{theorem}\label{thm: A-infty quasi-inverse}
Let $k$ be a field and $X$, $Y$ be   $k$-ringed spaces. Let   $F$, $G: \ubI(X)\to \ubI(Y)$ be two dg $k$-functors and $\Phi: F\to G$ be a dg $k$-natural transformation which is an objectwise homotopy equivalence. Then there exists an $A_{\infty}$ quasi-inverse of $\Phi$. More precisely, we choose and fix an objectwise homotopy inverse system $\mathcal{H}$ of $\Phi$ as in Definition \ref{thm: A-infty quasi-inverse} and there exist an $A_{\infty}$ $k$-natural transformation $\Psi: G\Rightarrow F$ and $A_{\infty}$ $k$-prenatural transformations  $\eta: F\Rightarrow F$ and $\omega: G\Rightarrow G$ of degree $-1$ such that
$$
\Psi\circ \Phi-\id_F=d\eta, \text{ and } \Phi\circ \Psi-\id_G=d\omega.
$$
Moreover, $\Psi$, $\eta$, and $\omega$ are defined by compositions of $F$, $G$, $\Phi$, and $\mathcal{H}$.
\end{theorem}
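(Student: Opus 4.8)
The plan is to construct the three families of components $\Psi^l$, $\eta^l$, $\omega^l$ simultaneously by induction on $l\geq 0$, with every component written as an explicit signed composite of $F$, $G$, $\Phi$ and the chosen data $\Psi_{\mathcal{E}}$, $h_{\mathcal{E}}$, $p_{\mathcal{E}}$ of the system $\mathcal{H}$ from Definition \ref{def: homotopy inverse systems}. The base case $l=0$ is forced: set $\Psi^0_{\mathcal{E}}:=\Psi_{\mathcal{E}}$, $\eta^0_{\mathcal{E}}:=h_{\mathcal{E}}$, and $\omega^0_{\mathcal{E}}:=p_{\mathcal{E}}$, so that the two defining relations of Definition \ref{def: A-infty quasi-inverse} hold at level $0$ by the very definition of $\mathcal{H}$. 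Throughout the induction I would maintain three invariants through level $l$: that $\Psi$ is closed under $d^{\infty}$ (so that the $\Psi$ we produce is a genuine $A_{\infty}$ $k$-natural transformation, not merely prenatural), and that the two homotopy relations $\Psi\circ\Phi-\id_F=d^{\infty}\eta$ and $\Phi\circ\Psi-\id_G=d^{\infty}\omega$ hold.

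First I would construct $\Psi^l$. The mechanism is already visible at $l=1$: closedness of $\Psi$ forces $\Psi^1(u)$ to be a chain homotopy relating $F(u)\Psi^0_{X_0}$ and $\Psi^0_{X_1}G(u)$, witnessing that the objectwise inverse $\Psi^0$ is natural only up to homotopy. Such a homotopy can be written down directly: inserting $\id_{GX_0}=\Phi_{X_0}\Psi_{X_0}-dp_{X_0}$, commuting $\Phi$ past $u$ by the strict dg-naturality of $\Phi$ from Definition \ref{def: dg-natural transformation}, and substituting $\Psi_{X_1}\Phi_{X_1}=\id_{FX_1}+dh_{X_1}$ expresses $\Psi^0_{X_1}G(u)-F(u)\Psi^0_{X_0}$ as a boundary plus $d$-correction terms, and the resulting primitive is $\Psi^1(u)=-h_{X_1}F(u)\Psi_{X_0}\pm\Psi_{X_1}G(u)p_{X_0}$. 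For general $l$ I would define $\Psi^l(u_l\otimes\cdots\otimes u_1)$ by the analogous recurrent recipe: take the degree $(1-l)$ expression built from the already-constructed $\Psi^{<l}$, the $F(u_i)$, $G(u_i)$ and $\Phi$ that appears in the level-$l$ instance of the closedness relation \eqref{eq: differential of A infty prenatural transformation}, and contract it with the homotopy $h$ on the target side and $p$ on the source side. This exhibits $\Psi^l$ as a signed sum of composites of $\Phi$, $\Psi_{\mathcal{E}}$, $h_{\mathcal{E}}$, $p_{\mathcal{E}}$ and the $F(u_i)$, $G(u_i)$, which is the promised recurrent formula.

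With $\Psi$ in hand I would define $\eta^l$ and $\omega^l$ by the same recipe, guided by the base cases $\eta^0=h$ and $\omega^0=p$. I would set $\eta^l(u_l\otimes\cdots\otimes u_1)$ to be the contraction by $h$ of the part of the level-$l$ instance of $\Psi\circ\Phi-\id_F=d^{\infty}\eta$ that is built from the already-known data, and dually $\omega^l$ the contraction by $p$ of the corresponding part of $\Phi\circ\Psi-\id_G=d^{\infty}\omega$. One then verifies that, with these choices, the full level-$l$ relations of Definition \ref{def: A-infty quasi-inverse} hold identically, \emph{including} the same-level correction terms $\eta^l(\cdots du_i\cdots)$ and $\omega^l(\cdots du_i\cdots)$ produced by the construction. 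This, together with the construction of $\Psi^l$, restores all three invariants at level $l$ and closes the induction.

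The crux — and the step I expect to be the genuine obstacle — is the verification that each lower-order expression being contracted is a \emph{cocycle}, i.e.\ that applying $d$ to it returns zero up to the controlled correction terms, so that the chosen homotopy really does produce a primitive and the target relation holds on the nose. This is where the inductive hypotheses, the strict dg-naturality of $\Phi$, and the identity $d^{\infty}\circ d^{\infty}=0$ on prenatural transformations are all consumed: heuristically $d^{\infty}(\Psi\circ\Phi-\id_F)=d^{\infty}d^{\infty}\eta=0$ together with $d^{\infty}\Phi=0$ forces $(d^{\infty}\Psi)\circ\Phi=0$, and the objectwise invertibility of $\Phi$ is precisely what allows the lower-order obstruction to be annihilated level by level. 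Carrying this out rigorously reduces to a disciplined bookkeeping of the Koszul signs in \eqref{eq: differential of A infty prenatural transformation} and in the composition rules of Definition \ref{def: composition of A-infty natural transformation}; keeping these signs consistent across the three coupled recursions, rather than any conceptual difficulty, is the real labor of the proof.
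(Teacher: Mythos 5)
Your construction of the left-sided data ($\Psi$ and $\eta$) is essentially the paper's: both proceed by induction on the level $m$, both isolate the level-$m$ obstructions (the paper's $\lambda^m=(d^{\infty}\widetilde{\Psi})^m$ and $\mu^m=(\widetilde{\Psi}\circ\Phi-\id_F-d^{\infty}\widetilde{\eta})^m$), both derive the cocycle\slash cone identities $d\lambda^m=0$ and $d\mu^m=\lambda^m\Phi_{\mathcal{E}_0}$ from $d^{\infty}\circ d^{\infty}=0$ together with $d^{\infty}\Phi=0$, and both kill the obstructions by contracting against the objectwise homotopy data at the source object $\mathcal{E}_0$, exactly as in the paper's Equations \eqref{eq: induction construction lambda} and \eqref{eq: induction construction mu}. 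Up to the choice of primitive, this half of your proposal is correct and matches the paper.

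The gap is in your treatment of $\omega$. You propose one simultaneous induction maintaining all three invariants for the \emph{same} $\Psi$, with $\omega^m$ obtained "by contraction with $p$" and the verification deferred; that deferred verification is precisely where the argument breaks. Once $\Psi^m$ has been fixed by the left-side recipe, the level-$m$ obstruction for the right relation is $\Phi_{\mathcal{E}_m}\Psi^m+\nu^m$ with $\nu^m=(\Phi\circ\widetilde{\Psi}-\id_G-d^{\infty}\widetilde{\omega})^m$. This element is closed (same $d^{\infty}d^{\infty}=0$ argument), but closed is not exact — the Hom-complexes here are not acyclic — and the contraction trick no longer applies because $\Phi$ now sits on the \emph{target} side: contracting with $p_{\mathcal{E}_m}$ only yields
$$
\Phi_{\mathcal{E}_m}\Psi^m+\nu^m \;=\; \Phi_{\mathcal{E}_m}\Psi_{\mathcal{E}_m}\bigl(\Phi_{\mathcal{E}_m}\Psi^m+\nu^m\bigr)\;-\;d\bigl(p_{\mathcal{E}_m}(\Phi_{\mathcal{E}_m}\Psi^m+\nu^m)\bigr),
$$
leaving a closed remainder whose exactness is exactly what you are trying to establish. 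Repairing this within your scheme would force you to re-adjust $\Psi^m$ by closed terms (using that $\Phi_{\mathcal{E}_m}\circ-$ is a quasi-isomorphism of Hom-complexes), which sacrifices the claim that everything is an explicit composite of $F$, $G$, $\Phi$, $\mathcal{H}$. The paper instead avoids the simultaneous induction entirely: it runs the one-sided induction a second time, independently, to produce a \emph{different} right quasi-inverse $\Psi^{\prime}$ with $\Phi\circ\Psi^{\prime}-\id_G=d^{\infty}\omega^{\prime}$ (there the contraction happens at the target object and does work), and then reconciles the two one-sided inverses by the standard uniqueness-of-inverses argument $\Psi^{\prime}=\Psi+d^{\infty}(\Psi\omega^{\prime}-\eta\Psi^{\prime})$, finally setting $\omega:=\omega^{\prime}+\Phi\eta\Psi^{\prime}-\Phi\Psi\omega^{\prime}$, which is still an explicit composite and satisfies $\Phi\circ\Psi-\id_G=d^{\infty}\omega$. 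Your proposal is missing this reconciliation step, and without it (or an equivalent substitute) the induction you describe does not close.
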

\begin{proof}
The proof is a refinement of \cite[Proposition 7.15]{lyubashenko2003category}. We construct $\Psi$, $\eta$, and $\omega$ by induction. First we construct the left inverse. Let $\Psi^0_{\mathcal{E}}=\Psi_{\mathcal{E}}$ and $\eta^0_{\mathcal{E}}=p_{\mathcal{E}}$ as in Definition \ref{def: homotopy inverse systems}. Now suppose that for an $m\geq 1$ we have constructed $\Psi^i$ and $\eta^i$,  $i=1,\ldots, m-1$ by compositions of  $F$, $G$, $\Phi$, and $\mathcal{H}$ such that
the auxiliary $A_{\infty}$ $k$-prenatural transformations 
\begin{equation*}
\begin{split}
\widetilde{\Psi}=&(\Psi^0,\Psi^1,\ldots, \Psi^{m-1},0,\ldots)\\
\widetilde{\eta}=&(\eta^0,\eta^1,\ldots, \eta^{m-1},0,\ldots)\\
\end{split}
\end{equation*}
satisfy
$$
(d^{\infty}\widetilde{\Psi})^l=0, \text{ and }  (\widetilde{\Psi}\circ \Phi-\id_F)^l=(d^{\infty}\widetilde{\eta})^l 
$$
hold for  $l=1,\ldots, m-1$. Now we introduce

We denote $(d^{\infty}\widetilde{\Psi})^m$ by $\lambda^m$.  For objects $\mathcal{E}_0,\ldots,\mathcal{E}_m$, $\lambda_m$ can by considered as a degree $1-m$ map 
$$ \ubI(X)(\mathcal{E}_{m-1},\mathcal{E}_m)\otimes\ldots \otimes  \ubI(X)(\mathcal{E}_0,\mathcal{E}_1)\to  \ubI(Y)(G\mathcal{E}_0,F\mathcal{E}_m).
$$
For later applications we consider $\lambda^m$ as a degree $0$ map
$$
\lambda^m\in \Hom^0( \ubI(X)(\mathcal{E}_{m-1},\mathcal{E}_m)[1]\otimes\ldots \otimes  \ubI(X)(\mathcal{E}_0,\mathcal{E}_1)[1],  \ubI(Y)(G\mathcal{E}_0,F\mathcal{E}_m)[1])
$$

Moreover we denote $(\widetilde{\Psi}\circ  \Phi-\id_F-d^{\infty}\widetilde{\eta})^m$ by $\mu^m$. As before we have
$$
\mu^m \in \Hom^{-1}( \ubI(X)(\mathcal{E}_{m-1},\mathcal{E}_m)[1]\otimes\ldots \otimes  \ubI(X)(\mathcal{E}_0,\mathcal{E}_1)[1],  \ubI(Y)(F\mathcal{E}_0,F\mathcal{E}_m)[1])
$$

\begin{lemma}\label{lemma: lambda defined by compositions}
In the above notation, $\lambda^m$ and $\mu^m$ are defined by by compositions of  $F$, $G$, $\Phi$, and $\mathcal{H}$.
\end{lemma}
\begin{proof}
By Definition \ref{def: differential of A_infty-prenatural transformation} we have
\begin{equation*}
\begin{split}
\lambda^m&(u_m\otimes\ldots\otimes u_1)=(d^{\infty}\widetilde{\Psi})^m(u_m\otimes\ldots\otimes u_1)=\\
&(-1)^{|u_m|-1}G(u_l)\Psi^{m-1}(u_{m-1}\otimes \ldots \otimes u_1)\\
&+(-1)^{n|u_1|-|u_1|-\ldots-|u_m|+m-1}\Psi^{m-1}(u_m\otimes\ldots\otimes u_2)F(u_1)\\
&+\sum_{i=1}^{m-1}(-1)^{|u_m|+\ldots+|u_{i+1}|+m-i+1}\Psi^{m-1}(u_m\otimes\ldots u_{i+1}u_i\otimes \ldots u_1)
\end{split}
\end{equation*}
The claim for $\lambda^m$ is clear by the induction hypothesis. We can prove the claim for $\mu^m$ in the same way.
\end{proof}

\begin{lemma}\label{lemma: cone}
In the above notation, we have
$$
d\lambda^m=0, \text{ and } d\mu^m=\lambda^m\Phi_{\mathcal{E}_0}.
$$
\end{lemma}
\begin{proof}
By the induction hypothesis we have $(d^{\infty}\widetilde{\Psi})^l=0$ for $l=1,\ldots, m-1$. Then  $d^{\infty}d^{\infty}\widetilde{\Psi}=d[(d^{\infty}\widetilde{\Psi})^m]=d\lambda^m$. Bu we have  $d^{\infty}d^{\infty}\widetilde{\Psi}=0$ hence $d\lambda^m=0$.

Since $(\widetilde{\Psi}\circ \Phi-\id_F-d^{\infty}\widetilde{\eta})^l=0$ for $l=1,\ldots, m-1$, it is clear that
$$
[d^{\infty}(\widetilde{\Psi}\circ \Phi-\id_F-d^{\infty}\widetilde{\eta})]^m=d\mu^m
$$
On the other hand since $\Phi$ and $\id_F$ are dg $k$-natural transformations, we have $d^{\infty}\Phi=0$ and $d^{\infty}\id_F=0$. Therefore
$$
d^{\infty}(\widetilde{\Psi}\circ \Phi-\id_F-d^{\infty}\widetilde{\eta})= (d^{\infty}\widetilde{\Psi})\circ \Phi.
$$
Compare the degree $m$ component we have $d\mu^m=\lambda^m\Phi_{\mathcal{E}_0}$.
\end{proof}

As suggested by \cite[Appendix B]{lyubashenko2003category} we let 
\begin{equation}\label{eq: induction construction lambda}
\Psi^m=\lambda^mp_{\mathcal{E}_0}-\mu^m\Psi_{\mathcal{E}_0}
\end{equation}
and
\begin{equation}\label{eq: induction construction mu}
\begin{split}
& \eta^m=-\mu^mh_{\mathcal{E}_0}+\mu^m\Psi_{\mathcal{E}_0}\Phi_{\mathcal{E}_0}h_{\mathcal{E}_0}\\
-\lambda^mp_{\mathcal{E}_0}\Phi_{\mathcal{E}_0}h_{\mathcal{E}_0}&-\mu^m\Psi_{\mathcal{E}_0}p_{\mathcal{E}_0}\Phi_{\mathcal{E}_0}+\lambda^mp_{\mathcal{E}_0}p_{\mathcal{E}_0}p_{\mathcal{E}_0}\Phi_{\mathcal{E}_0}.
\end{split}
\end{equation}
It is clear that 
\begin{equation}\label{eq: induction result}
d(\Psi^m)=-\lambda^m, \text{ and } d(\eta^m)=\Psi^m\Phi_{\mathcal{E}_0}+\mu^m.
\end{equation}
Let
\begin{equation*}
\begin{split}
\widetilde{\widetilde{\Psi}}=&(\Psi^0,\Psi^1,\ldots, \Psi^{m-1},\Psi^m,0,\ldots)\\
\widetilde{\widetilde{\eta}}=&(\eta^0,\eta^1,\ldots, \eta^{m-1},\eta^m,0,\ldots)\\
\end{split}
\end{equation*}
It is clear by Equation \eqref{eq: induction result} and the induction hypothesis that 
$$
(d^{\infty}\widetilde{\widetilde{\Psi}})^l=0, \text{ and }  (\widetilde{\widetilde{\Psi}}\circ \Phi-\id_F)^l=(d^{\infty}\widetilde{\widetilde{\eta}})^l 
$$
hold for  $l=1,\ldots, m$. Then by induction we construct an $A_{\infty}$ $k$-natural transformation $\Psi: G\Rightarrow F$  of degree $0$ and an $A_{\infty}$ $k$-prenatural transformation $\eta: F\Rightarrow F$ of degree $-1$ such that
$$
d^{\infty}\Psi=0, \text{ and }  \Psi\circ \Phi-\id_F=d^{\infty} \eta.
$$
Notice that $\Psi$ and $\eta$ are defined by compositions of $F$, $G$, $\Phi$, and $\mathcal{H}$.

We need to construct the homotopy of the other composition. Actually in the same way we can construct an $A_{\infty}$ $k$-natural transformation $\Psi^{\prime}: G\Rightarrow F$  of degree $0$ and an $A_{\infty}$ $k$-prenatural transformation $\omega^{\prime}: G\Rightarrow G$ of degree $-1$ such that
$$
d^{\infty}\Psi^{\prime}=0, \text{ and }  \Phi\circ \Psi^{\prime}-\id_G=d^{\infty} \omega^{\prime}
$$
where  $\Psi^{\prime}$ and  $\omega^{\prime}$  are also defined by compositions of $F$, $G$, $\Phi$, and $\mathcal{H}$.
It is clear that
$$
\Psi^{\prime}=\Psi+d^{\infty}(\Psi\omega^{\prime}-\eta\Psi^{\prime}).
$$
Therefore let
$$
\omega:=\omega^{\prime}+\Phi\eta\Psi^{\prime}-\Phi\Psi\omega^{\prime}
$$
then we have 
$$
\Phi\circ \Psi-\id_G=d^{\infty} \omega.
$$
and $\omega$ is also defined by compositions of $F$, $G$, $\Phi$, and $\mathcal{H}$.
\end{proof}

\begin{rmk}
The recurrent definition of $\Psi$ and $\eta$ is given by Equation \eqref{eq: induction construction lambda} and \eqref{eq: induction construction mu}.
\end{rmk}

\begin{corollary}
Let $k$ be a field and $X\overset{f}{\to}Y\overset{g}{\to} Z$ be morphisms of $k$-ringed spaces. Then there exist $A_{\infty}$ $k$-natural transformations
\begin{equation}\label{eq: 2 morphisms for compositions}
\begin{split}
T^{f,g}&: \underline{(gf)}^*\simto \underline{f}^*\underline{g}^*\\
T_{f,g}&: \underline{(gf)}_*\simto \underline{g}_*\underline{f}_*\\
T^{\id}&: \underline{\id}^*\simto \id\\
T_{\id}&: \underline{\id}_*\simto \id\\
\end{split}
\end{equation}
which only depend on the choice of objectwise homotopy inverse systems as in Definition \ref{def: homotopy inverse systems}.
\end{corollary}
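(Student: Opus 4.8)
The plan is to deduce the Corollary directly from Theorem \ref{thm: A-infty quasi-inverse} by upgrading each of the zig-zags of objectwise homotopy equivalences in Proposition \ref{prop: composition of pull back functors and push forward functors} into a single $A_{\infty}$ $k$-natural transformation. I will describe the construction of $T^{f,g}$ in detail; the constructions of $T_{f,g}$, $T^{\id}$, and $T_{\id}$ are identical, applied to the corresponding zig-zags.

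First I would record the zig-zag
$$
\underline{(gf)}^* = \mathbf{i}(gf)^*\mathbf{e} \simto \mathbf{i}f^*g^*\mathbf{e} \xleftarrow[\sim]{\epsilon} \mathbf{i}f^*\mathbf{e}g^*\mathbf{e} \xrightarrow[\sim]{\iota} \mathbf{i}f^*\mathbf{e}\mathbf{i}g^*\mathbf{e} = \underline{f}^*\underline{g}^*,
$$
all of whose arrows are dg $k$-natural transformations that are objectwise homotopy equivalences. Since a composite of two same-direction arrows is again a dg $k$-natural transformation and again an objectwise homotopy equivalence (a composite of homotopy equivalences is one), I may first collapse any consecutive forward, respectively backward, arrows, so that the zig-zag strictly alternates direction. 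Each backward arrow is then a dg $k$-natural transformation $\Phi$ that is an objectwise homotopy equivalence, so Theorem \ref{thm: A-infty quasi-inverse} applies: after fixing an objectwise homotopy inverse system for $\Phi$ as in Definition \ref{def: homotopy inverse systems}, it produces an $A_{\infty}$ $k$-natural transformation $\Psi$ pointing in the forward direction, together with auxiliary data $\eta$ and $\omega$.

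Next I would assemble $T^{f,g}$ as the ordered composite of the surviving forward dg $k$-natural transformations with the $A_{\infty}$ quasi-inverses of the backward arrows. Because the reduced zig-zag alternates direction, every composition required is one of a dg $k$-natural transformation with an $A_{\infty}$ $k$-natural transformation, which is exactly what Definition \ref{def: composition of A-infty natural transformation} provides, and the output is again an $A_{\infty}$ $k$-natural transformation. Its $0$-th component at each object is the composite of the corresponding $0$-th components, hence a composite of homotopy equivalences and therefore itself a homotopy equivalence; this is what justifies the symbol $\simto$ and exhibits $T^{f,g}$ as an $A_{\infty}$ equivalence.

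Finally, for the dependence claim I would note that the forward arrows are the canonical dg $k$-natural transformations built from $\iota$ and $\epsilon$ in Proposition \ref{prop: functorial injective resolution}, which involve no choices, while by Theorem \ref{thm: A-infty quasi-inverse} each $A_{\infty}$ quasi-inverse is defined by compositions of the relevant dg $k$-functors, the arrow $\Phi$, and the chosen objectwise homotopy inverse system. Hence the assembled $T^{f,g}$ depends only on the chosen objectwise homotopy inverse systems. The main obstacle I anticipate is essentially bookkeeping: one must ensure that the reduction never secretly forces the composition of two $A_{\infty}$ $k$-natural transformations, a composition not set up in Definition \ref{def: composition of A-infty natural transformation}, which is precisely why collapsing consecutive backward arrows into a single dg $k$-natural transformation \emph{before} inverting is essential; with that arranged, the remaining verifications are routine.
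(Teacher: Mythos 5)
Your proposal is correct and is essentially the paper's own argument: the paper proves the corollary in one line as a ``direct corollary'' of Proposition \ref{prop: composition of pull back functors and push forward functors} and Theorem \ref{thm: A-infty quasi-inverse}, and your construction --- collapsing same-direction arrows in each zig-zag, inverting each backward dg $k$-natural transformation via Theorem \ref{thm: A-infty quasi-inverse}, and composing via Definition \ref{def: composition of A-infty natural transformation} --- is exactly the elaboration of that line. One small caution: your claim that alternation alone guarantees every composition pairs a dg with an $A_{\infty}$ $k$-natural transformation would fail for a zig-zag containing two or more backward arrows (one would then have to compose two $A_{\infty}$ $k$-natural transformations, which the paper's Definition \ref{def: composition of A-infty natural transformation} does not cover), but the zig-zags actually produced by Proposition \ref{prop: composition of pull back functors and push forward functors} contain at most one backward arrow after collapsing, so your argument goes through.
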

\begin{proof}
It is a direct corollary of Proposition \ref{prop: composition of pull back functors and push forward functors} and Theorem \ref{thm: A-infty quasi-inverse}.
\end{proof}

\begin{rmk}
In \cite{wei2019twisted} the formula in this note is used to construct $A_{\infty}$-inverse of dg-natural transformations between twisted complexes. Moreover in an upcoming work \cite{wei2019homotopy}, the formula in this note, together with the method in \cite{wei2016twisted}, \cite{wei2018descent}, \cite{block2017explicit}, and \cite{arkhipov2018homotopy2}, can be used to obtain an injective dg-resolution of the equivariant derived category \cite{bernstein1994equivariant}. 
\end{rmk}

\bibliography{Ainfinity}{}
\bibliographystyle{alpha}
\end{document}